\newtheorem{theorem}{Theorem}
\theoremstyle{plain}
\newtheorem{corollary}{Corollary}
\newtheorem{lemma}{Lemma}
\newtheorem{remark}{Remark}
\numberwithin{equation}{section}
\begin{document}
\title[Ostrowski-type inequality]{On the weighted Ostrowski type integral
inequality for double integrals}
\author{Mehmet Zeki SARIKAYA}
\address{Department of Mathematics, \ Faculty of Science and Arts, D\"{u}zce
University, D\"{u}zce-TURKEY}
\email{sarikayamz@gmail.com}
\author{Hasan Ogunmez}
\address{Department of Mathematics, \ Faculty of Science and Arts, Afyon
Kocatepe University, Afyon-TURKEY}
\email{hogunmez@aku.edu.tr}
\subjclass[2000]{ 26D07, 26D15}
\keywords{Ostrowski's inequality.}

\begin{abstract}
In this paper, we establish new an inequality of weighted Ostrowski-type for
double integrals involving functions of two independent variables by using
fairly elementary analysis.
\end{abstract}

\maketitle

\section{Introduction}

In 1938, the classical integral inequality established by Ostrowski \cite%
{Ostrowski} as follows:

\begin{theorem}
\label{z1} Let $f:[a,b]\mathbb{\rightarrow R}$ be a differentiable mapping
on $(a,b)$ whose derivative $f^{^{\prime }}:(a,b)\mathbb{\rightarrow R}$ is
bounded on $(a,b),$ i.e., $\left\Vert f^{\prime }\right\Vert _{\infty }=%
\underset{t\in (a,b)}{\overset{}{\sup }}\left\vert f^{\prime }(t)\right\vert
<\infty .$ Then, the inequality holds:%
\begin{equation}
\left\vert f(x)-\frac{1}{b-a}\int\limits_{a}^{b}f(t)dt\right\vert \leq \left[
\frac{1}{4}+\frac{(x-\frac{a+b}{2})^{2}}{(b-a)^{2}}\right] (b-a)\left\Vert
f^{\prime }\right\Vert _{\infty }  \label{1}
\end{equation}%
for all$\ x\in \lbrack a,b].$ The constant $\frac{1}{4}$ is the best
possible.
\end{theorem}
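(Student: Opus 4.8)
The plan is to prove the inequality through a Peano-type kernel identity followed by a routine estimate. First I would introduce the kernel
\begin{equation*}
p(x,t)=
\begin{cases}
t-a, & a\leq t\leq x, \\
t-b, & x<t\leq b,
\end{cases}
\end{equation*}
and assert the representation identity
\begin{equation*}
f(x)-\frac{1}{b-a}\int_{a}^{b}f(t)\,dt=\frac{1}{b-a}\int_{a}^{b}p(x,t)f^{\prime }(t)\,dt.
\end{equation*}
To verify it, I would split the right-hand integral at $t=x$ and integrate each piece by parts, using the differentiability of $f$ on $(a,b)$. The boundary contributions collapse to $(x-a)f(x)+(b-x)f(x)=(b-a)f(x)$, while the surviving terms reassemble into $-\int_{a}^{b}f(t)\,dt$, which yields the claimed identity after dividing by $b-a$.

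With the identity established, the argument becomes pure estimation. Taking absolute values and factoring out $\left\Vert f^{\prime }\right\Vert _{\infty }$ gives
\begin{equation*}
\left\vert f(x)-\frac{1}{b-a}\int_{a}^{b}f(t)\,dt\right\vert \leq \frac{\left\Vert f^{\prime }\right\Vert _{\infty }}{b-a}\int_{a}^{b}\left\vert p(x,t)\right\vert \,dt.
\end{equation*}
Since $t-a\geq 0$ on $[a,x]$ and $t-b\leq 0$ on $(x,b]$, the kernel integral is elementary, namely
\begin{equation*}
\int_{a}^{b}\left\vert p(x,t)\right\vert \,dt=\int_{a}^{x}(t-a)\,dt+\int_{x}^{b}(b-t)\,dt=\frac{(x-a)^{2}+(b-x)^{2}}{2}.
\end{equation*}
The remaining step is algebraic reconciliation with the stated bound: writing $u=x-\frac{a+b}{2}$ and expanding $(x-a)^{2}+(b-x)^{2}=2u^{2}+\frac{(b-a)^{2}}{2}$, I obtain
\begin{equation*}
\frac{(x-a)^{2}+(b-x)^{2}}{2(b-a)}=\frac{b-a}{4}+\frac{u^{2}}{b-a}=\left[ \frac{1}{4}+\frac{(x-\frac{a+b}{2})^{2}}{(b-a)^{2}}\right] (b-a),
\end{equation*}
which is precisely \eqref{1}.

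The part I expect to demand the most care is the sharpness of the constant $\frac{1}{4}$. Here I would specialize to the midpoint $x=\frac{a+b}{2}$, where the bracket reduces to $\frac{1}{4}$, and test the inequality against (a mollified approximation of) $f(t)=\left\vert t-\frac{a+b}{2}\right\vert$, whose derivative has modulus $1$ off the midpoint so that $\left\Vert f^{\prime }\right\Vert _{\infty }=1$. A direct computation gives $f(x)=0$ and $\frac{1}{b-a}\int_{a}^{b}f(t)\,dt=\frac{b-a}{4}$, so the left-hand side equals $\frac{b-a}{4}$, forcing any admissible constant $c$ to satisfy $c\geq \frac{1}{4}$. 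The only genuine subtlety is that this extremal fails to be differentiable at the single point $x$; I would resolve this by a standard approximation argument, replacing $f$ by a smooth family whose derivatives stay uniformly bounded by $1$ and that converge to $\left\vert t-\frac{a+b}{2}\right\vert$, so that equality is attained in the limit and no smaller constant can hold.
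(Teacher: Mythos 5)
Your proof is correct. Note, however, that the paper does not actually prove Theorem \ref{z1}: it is quoted in the introduction as Ostrowski's classical 1938 result, with a citation in place of an argument, so there is no in-paper proof to compare against. That said, your route --- the Montgomery/Peano kernel $p(x,t)$, the representation identity obtained by splitting at $t=x$ and integrating by parts, the estimate via $\int_a^b\left\vert p(x,t)\right\vert dt=\frac{(x-a)^2+(b-x)^2}{2}$, and the completion of the square in $x-\frac{a+b}{2}$ --- is the standard proof and is exactly the one-dimensional prototype of what the paper does for its own results: Lemma \ref{z} is the weighted two-variable analogue of your kernel identity, and the proof of Theorem \ref{zz} is the analogue of your modulus estimate. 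All of your computations check out, including the sharpness argument at $x=\frac{a+b}{2}$ with $f(t)=\left\vert t-\frac{a+b}{2}\right\vert$; you correctly flag that this extremal must be mollified to meet the differentiability hypothesis, and the uniform bound $\left\Vert f^{\prime}\right\Vert_{\infty}\leq 1$ along the approximating family makes that limit argument routine.
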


In a recent paper \cite{Barnett}, Barnett and Dragomir proved the following
Ostrowski type inequality for double integrals

\begin{theorem}
Let $f:[a,b]\times \lbrack c,d]\mathbb{\rightarrow R}$ be continuous on $%
[a,b]\times \lbrack c,d],$ $f_{x,y}^{\prime \prime }=\frac{\partial ^{2}f}{%
\partial x\partial y}$ exists on $(a,b)\times (c,d)$ and is bounded$,$ i.e., 
\begin{equation*}
\left\Vert f_{x,y}^{\prime \prime }\right\Vert _{\infty }=\underset{(x,y)\in
(a,b)\times (c,d)}{\overset{}{\sup }}\left\vert \frac{\partial ^{2}f(x,y)}{%
\partial x\partial y}\right\vert <\infty .
\end{equation*}
Then, we have the inequality:%
\begin{eqnarray}
&&\left\vert
\int\limits_{a}^{b}\int\limits_{c}^{d}f(s,t)dtds-(d-c)(b-a)f(x,y)\right. 
\notag \\
&&  \notag \\
&&\ \ \ \ \ \ \ \ \ \ \ \ \ \ \ \ \ \ \ \ \ \ \ \ \ \ \ \left. -\left[
(b-a)\int\limits_{c}^{d}f(x,t)dt+(d-c)\int\limits_{a}^{b}f(s,y)ds\right]
\right\vert  \label{2} \\
&&  \notag \\
&\leq &\left[ \frac{1}{4}(b-a)^{2}+(x-\frac{a+b}{2})^{2}\right] \left[ \frac{%
1}{4}(d-c)^{2}+(y-\frac{d+c}{2})^{2}\right] \left\Vert f_{x,y}^{\prime
\prime }\right\Vert _{\infty }  \notag
\end{eqnarray}%
for all $(x,y)\in \lbrack a,b]\times \lbrack c,d].$
\end{theorem}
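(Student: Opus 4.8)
The plan is to represent the expression inside the absolute value in \eqref{2} as a single weighted double integral of the mixed derivative $f''_{x,y}$ via a two-dimensional Peano-kernel (Montgomery-type) identity, and then to estimate that integral by extracting $\|f''_{x,y}\|_\infty$ and integrating the modulus of the kernel. First I would introduce the two one-variable kernels
\[
p(x,s)=\begin{cases} s-a, & a\le s\le x,\\ s-b, & x<s\le b,\end{cases}
\qquad
q(y,t)=\begin{cases} t-c, & c\le t\le y,\\ t-d, & y<t\le d,\end{cases}
\]
and work with the product kernel $p(x,s)\,q(y,t)$ on $[a,b]\times[c,d]$.

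The core step is to prove the identity
\[
\int_a^b\!\int_c^d p(x,s)\,q(y,t)\,\frac{\partial^2 f(s,t)}{\partial s\,\partial t}\,dt\,ds
=\int_a^b\!\int_c^d f(s,t)\,dt\,ds+(b-a)(d-c)f(x,y)-(b-a)\!\int_c^d f(x,t)\,dt-(d-c)\!\int_a^b f(s,y)\,ds,
\]
whose right-hand member is exactly the combination appearing in \eqref{2}. I would establish it by iterated integration by parts. Fixing $s$ and integrating in $t$ against $q(y,t)$, I split the $t$-integral at $t=y$; the boundary contributions $(y-c)f'_s(s,y)$ and $(d-y)f'_s(s,y)$ recombine so that the inner integral collapses to $(d-c)f'_s(s,y)-\int_c^d f'_s(s,t)\,dt$. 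Substituting this back and integrating in $s$ against $p(x,s)$, I then invoke the one-dimensional Montgomery relation $\int_a^b p(x,s)\,g'(s)\,ds=(b-a)g(x)-\int_a^b g(s)\,ds$ twice: once with $g(s)=f(s,y)$, and once with $g(s)=\int_c^d f(s,t)\,dt$ after differentiating under the integral sign and interchanging the order of integration. Both manipulations are legitimate because $f$ is continuous and $f''_{x,y}$ is bounded, and together they produce the four terms above.

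With the identity in hand the estimate is short. Taking absolute values and using $\bigl|\frac{\partial^2 f}{\partial s\,\partial t}\bigr|\le\|f''_{x,y}\|_\infty$ bounds the left-hand side by $\|f''_{x,y}\|_\infty\int_a^b\!\int_c^d|p(x,s)|\,|q(y,t)|\,dt\,ds$, and since the kernel factorises this double integral equals $\bigl(\int_a^b|p(x,s)|\,ds\bigr)\bigl(\int_c^d|q(y,t)|\,dt\bigr)$. An elementary computation yields
\[
\int_a^b|p(x,s)|\,ds=\frac{(x-a)^2}{2}+\frac{(b-x)^2}{2}=\frac14(b-a)^2+\Bigl(x-\tfrac{a+b}{2}\Bigr)^2,
\]
and by symmetry $\int_c^d|q(y,t)|\,dt=\frac14(d-c)^2+\bigl(y-\tfrac{c+d}{2}\bigr)^2$. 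Multiplying these two factors by $\|f''_{x,y}\|_\infty$ gives precisely the right-hand side of \eqref{2}.

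I expect the main obstacle to be the clean derivation of the identity: the double integration by parts against the piecewise-defined kernels requires careful bookkeeping of the boundary terms at $s=x$ and $t=y$, and one must justify both differentiation under the integral sign and the interchange of the order of integration. Once the identity is secured, the remaining steps---dominating by the supremum norm, factoring the kernel integral, and evaluating $\int_a^b|p(x,s)|\,ds$---are entirely routine.
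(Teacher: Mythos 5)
Your argument is essentially the paper's own: your kernels $p(x,s),q(y,t)$ are exactly the $p_{1},p_{2},q_{1},q_{2}$ of Lemma \ref{z} specialized to $w\equiv 1$, your double integration by parts reproduces the identity (\ref{10}), and the final estimate together with the computation $\int_a^b|p(x,s)|\,ds=\frac{(x-a)^2+(b-x)^2}{2}=\frac14(b-a)^2+(x-\frac{a+b}{2})^2$ is precisely the proof of Theorem \ref{zz}, of which the stated inequality is the case $w\equiv 1$ (cf.\ the paper's Remark 1). The only discrepancy is a sign: your identity carries $+(b-a)(d-c)f(x,y)$ whereas the displayed (\ref{2}) has $-(d-c)(b-a)f(x,y)$; your sign is the correct one (the printed version already fails for $f\equiv 1$), so this is a typo in the statement rather than a gap in your proof.
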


In \cite{Barnett}, the inequality (\ref{2}) is established by the use of
integral identity involving Peano kernels. In \cite{Pachpatte1}, Pachpatte
obtained an inequality in the view (\ref{2}) by using elementary analysis.
The interested reader is also refered to (\cite{Barnett}, \cite{Dragomir}, 
\cite{Pachpatte}-\cite{Ujevic}) for Ostrowski type inequalities \ in several
independent variables.

The main aim of this note is to establish a new weighted Ostrowski type
inequality for double integrals involving functions of two independent
variables and their partial derivatives.

\section{Main Result}

Throughout this work, we assume that the weight function $w:\left[ a,b\right]
\rightarrow \lbrack 0,\infty ),$ is integrable, nonnegative and 
\begin{equation}
m(a,b)=\int\limits_{a}^{b}w(t)dt<\infty .  \label{3}
\end{equation}

\begin{lemma}
\label{z} Let $f:[a,b]\times \lbrack c,d]\mathbb{\rightarrow R}$ be an
absolutely continuous fuction such that the partial derivative of order $2$
exist for all $(t,s)\in \lbrack a,b]\times \lbrack c,d].$ Then, we have%
\begin{eqnarray}
f(x,y) &=&\frac{1}{m(a,b)}\dint\limits_{a}^{b}w(t)f(t,y)dt+\frac{1}{m(c,d)}%
\dint\limits_{c}^{d}w(s)f(x,s)ds  \notag \\
&&  \notag \\
&&-\frac{1}{m(a,b)m(c,d)}\left[ \dint\limits_{a}^{b}\dint%
\limits_{c}^{d}w(t)w(s)f(t,s)dsdt-\dint\limits_{a}^{b}\dint%
\limits_{c}^{d}p(x,t)q(y,s)\frac{\partial ^{2}f(t,s)}{\partial t\partial s}%
dsdt\right]   \label{4}
\end{eqnarray}%
where%
\begin{equation*}
p(x,t)=\left\{ 
\begin{array}{ll}
p_{1}(a,t)=\dint\limits_{a}^{t}w(u)du, & a\leq t<x \\ 
&  \\ 
p_{2}(b,t)=\dint\limits_{b}^{t}w(u)du, & x\leq t\leq b%
\end{array}%
\right. 
\end{equation*}%
and%
\begin{equation*}
q(y,s)=\left\{ 
\begin{array}{ll}
q_{1}(c,s)=\dint\limits_{c}^{s}w(u)du, & c\leq s<y \\ 
&  \\ 
q_{2}(d,s)=\dint\limits_{d}^{s}w(u)du, & y\leq s\leq d.%
\end{array}%
\right. 
\end{equation*}
\end{lemma}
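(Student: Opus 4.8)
The plan is to reduce the two-dimensional identity (\ref{4}) to an iterated application of the one-variable weighted Montgomery identity, the engine being integration by parts against the piecewise kernels $p(x,\cdot)$ and $q(y,\cdot)$. The crucial feature is that each kernel is continuous except for a single interior jump: $p(x,t)$ has a jump of $-m(a,b)$ across $t=x$, since $p_1(a,x^-)=\int_a^x w(u)\,du$ while $p_2(b,x^+)=\int_b^x w(u)\,du=-\int_x^b w(u)\,du$, and likewise $q(y,s)$ jumps by $-m(c,d)$ across $s=y$. It is precisely this jump that, after integration by parts, reproduces the point value $f(x,y)$ on the left-hand side.

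First I would establish the scalar identity: for an absolutely continuous $g$ on $[a,b]$,
\begin{equation*}
\int_a^b p(x,t)\,g'(t)\,dt = m(a,b)\,g(x) - \int_a^b w(t)\,g(t)\,dt .
\end{equation*}
This follows by splitting $\int_a^b=\int_a^x+\int_x^b$ and integrating by parts on each piece. On $[a,x]$ the factor $\int_a^t w$ vanishes at the lower endpoint and contributes $\bigl(\int_a^x w\bigr)g(x)$ at $x$; on $[x,b]$ the factor $\int_b^t w$ vanishes at $b$ and contributes $-\bigl(\int_b^x w\bigr)g(x)=\bigl(\int_x^b w\bigr)g(x)$ at $x$. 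The two boundary contributions at $x$ add to $m(a,b)\,g(x)$, while the derivatives of the kernels (both equal to $w$) produce $-\int_a^b w\,g$.

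Now I would apply this scalar identity twice to the double integral
\begin{equation*}
I = \int_a^b\int_c^d p(x,t)\,q(y,s)\,\frac{\partial^2 f(t,s)}{\partial t\,\partial s}\,ds\,dt .
\end{equation*}
By Fubini's theorem, integrate first in $s$, treating $g(s)=\partial f(t,s)/\partial t$ as the scalar function; the identity gives
\begin{equation*}
\int_c^d q(y,s)\,\frac{\partial^2 f(t,s)}{\partial t\,\partial s}\,ds = m(c,d)\,\frac{\partial f(t,y)}{\partial t} - \int_c^d w(s)\,\frac{\partial f(t,s)}{\partial t}\,ds .
\end{equation*}
Substituting this back into $I$ and applying the scalar identity a second time in the $t$-variable, once to $h(t)=f(t,y)$ and once, after interchanging the order of integration, to $h(t)=f(t,s)$, collapses $I$ into
\begin{equation*}
I = m(a,b)m(c,d)\,f(x,y) - m(c,d)\!\int_a^b w(t)f(t,y)\,dt - m(a,b)\!\int_c^d w(s)f(x,s)\,ds + \int_a^b\int_c^d w(t)w(s)f(t,s)\,ds\,dt .
\end{equation*}
Solving this linear relation for $f(x,y)$ and dividing by $m(a,b)m(c,d)$ produces exactly the asserted identity (\ref{4}).

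The main obstacle is bookkeeping rather than anything conceptual: one must track the boundary terms across the interior jumps at $t=x$ and $s=y$ without sign errors, noting in particular that the kernel is non-positive on its upper piece because $\int_b^t w=-\int_t^b w$, and one must justify the use of Fubini's theorem together with the interchange of integration and differentiation. The latter is guaranteed by the standing hypothesis that $f$ is absolutely continuous on the rectangle with existing second-order mixed partial derivative, so that all the iterated integrals appearing above are finite and the order of integration may be freely exchanged.
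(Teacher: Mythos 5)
Your argument is correct, and it reaches the paper's key intermediate identity (equation (\ref{10})) by a genuinely different organization of the computation. The paper decomposes the double integral $\int_a^b\int_c^d p\,q\,f''_{ts}\,ds\,dt$ into the four quadrant integrals over $[a,x]\times[c,y]$, $[a,x]\times[y,d]$, $[x,b]\times[c,y]$, $[x,b]\times[y,d]$ (its equation (\ref{5})), integrates each one by parts twice (equations (\ref{6})--(\ref{9})), and then adds the four results; the point value $f(x,y)$ emerges from the sum $p_1(a,x)q_1(c,y)-p_1(a,x)q_2(d,y)-p_2(b,x)q_1(c,y)+p_2(b,x)q_2(d,y)=m(a,b)m(c,d)$. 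You instead isolate the one-variable weighted Montgomery identity $\int_a^b p(x,t)g'(t)\,dt=m(a,b)g(x)-\int_a^b w(t)g(t)\,dt$ as a lemma and apply it twice, first in $s$ to $g=\partial f/\partial t$ and then in $t$, using Fubini to untangle the resulting iterated integrals. The computational core (integration by parts against the same piecewise kernels, with the jump at the interior node producing the point evaluation) is identical, but your factorization through the scalar identity halves the bookkeeping, makes the sign analysis at the jumps transparent, and generalizes immediately to $n$ variables; the paper's fourfold expansion is more explicit but more error-prone. You are also more careful than the paper in flagging the regularity needed to justify Fubini and the interchange of integration and differentiation, which the paper's hypotheses only implicitly supply. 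One minor point to tighten: when you apply the scalar identity in $s$ for fixed $t$, you need $s\mapsto\partial f(t,s)/\partial t$ to be absolutely continuous with derivative $\partial^2 f/\partial t\,\partial s$ for a.e.\ $t$; this is the same (slightly stronger than stated) smoothness the paper itself uses in (\ref{6})--(\ref{9}), so it is not a defect of your proof relative to the original.
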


\begin{proof}
By definitions of $p(x,t)$ and $q(y,s),$ we have%
\begin{equation}
\begin{array}{l}
\dint\limits_{a}^{b}\dint\limits_{c}^{d}p(x,t)q(y,s)\dfrac{\partial
^{2}f(t,s)}{\partial t\partial s}dsdt=\dint\limits_{a}^{x}\dint%
\limits_{c}^{y}p_{1}(a,t)q_{1}(c,s)\dfrac{\partial ^{2}f(t,s)}{\partial
t\partial s}dsdt \\ 
\\ 
+\dint\limits_{a}^{x}\dint\limits_{y}^{d}p_{1}(a,t)q_{2}(d,s)\dfrac{\partial
^{2}f(t,s)}{\partial t\partial s}dsdt+\dint\limits_{x}^{b}\dint%
\limits_{c}^{y}p_{2}(b,t)q_{1}(c,s)\dfrac{\partial ^{2}f(t,s)}{\partial
t\partial s}dsdt \\ 
\\ 
+\dint\limits_{x}^{b}\dint\limits_{y}^{d}p_{2}(b,t)q_{2}(d,s)\dfrac{\partial
^{2}f(t,s)}{\partial t\partial s}dsdt.%
\end{array}
\label{5}
\end{equation}%
Integrating by parts, we can state:%
\begin{equation}
\begin{array}{l}
\dint\limits_{a}^{x}\dint\limits_{c}^{y}p_{1}(a,t)q_{1}(c,s)\dfrac{\partial
^{2}f(t,s)}{\partial t\partial s}dsdt \\ 
\\ 
=\dint\limits_{a}^{x}p_{1}(a,t)\left[ q_{1}(c,y)\dfrac{\partial f(t,y)}{%
\partial t}-\dint\limits_{c}^{y}w(s)\dfrac{\partial f(t,s)}{\partial t}ds%
\right] dt \\ 
\\ 
=p_{1}(a,x)q_{1}(c,y)f(x,y)-q_{1}(c,y)\dint\limits_{a}^{x}w(t)f(t,y)dt \\ 
\\ 
-p_{1}(a,x)\dint\limits_{c}^{y}w(s)f(x,s)ds+\dint\limits_{a}^{x}\dint%
\limits_{c}^{y}w(s)w(t)f(t,s)dsdt.%
\end{array}
\label{6}
\end{equation}%
\begin{equation}
\begin{array}{l}
\dint\limits_{a}^{x}\dint\limits_{y}^{d}p_{1}(a,t)q_{2}(d,s)\dfrac{\partial
^{2}f(t,s)}{\partial t\partial s}dsdt \\ 
\\ 
=\dint\limits_{a}^{x}p_{1}(a,t)\left[ -q_{2}(d,y)\dfrac{\partial f(t,y)}{%
\partial t}-\dint\limits_{y}^{d}w(s)\dfrac{\partial f(t,s)}{\partial t}ds%
\right] dt \\ 
\\ 
=-p_{1}(a,x)q_{2}(d,y)f(x,y)+q_{2}(d,y)\dint\limits_{a}^{x}w(t)f(t,y)dt \\ 
\\ 
-p_{1}(a,x)\dint\limits_{y}^{d}w(s)f(x,s)ds+\dint\limits_{a}^{x}\dint%
\limits_{y}^{d}w(s)w(t)f(t,s)dsdt.%
\end{array}
\label{7}
\end{equation}%
\begin{equation}
\begin{array}{l}
\dint\limits_{x}^{b}\dint\limits_{c}^{y}p_{2}(b,t)q_{1}(c,s)\dfrac{\partial
^{2}f(t,s)}{\partial t\partial s}dsdt \\ 
\\ 
=\dint\limits_{x}^{b}p_{2}(b,t)\left[ q_{1}(c,y)\dfrac{\partial f(t,y)}{%
\partial t}-\dint\limits_{c}^{y}w(s)\dfrac{\partial f(t,s)}{\partial t}ds%
\right] dt \\ 
\\ 
=-p_{2}(b,x)q_{1}(c,y)f(x,y)-q_{1}(c,y)\dint\limits_{x}^{b}w(t)f(t,y)dt \\ 
\\ 
+p_{2}(b,x)\dint\limits_{c}^{y}w(s)f(x,s)ds+\dint\limits_{x}^{b}\dint%
\limits_{c}^{y}w(s)w(t)f(t,s)dsdt.%
\end{array}
\label{8}
\end{equation}%
\begin{equation}
\begin{array}{l}
\dint\limits_{x}^{b}\dint\limits_{y}^{d}p_{2}(b,t)q_{2}(d,s)\dfrac{\partial
^{2}f(t,s)}{\partial t\partial s}dsdt \\ 
\\ 
=\dint\limits_{x}^{b}p_{2}(b,t)\left[ q_{2}(d,y)\dfrac{\partial f(t,y)}{%
\partial t}-\dint\limits_{y}^{d}w(s)\dfrac{\partial f(t,s)}{\partial t}ds%
\right] dt \\ 
\\ 
=p_{2}(b,x)q_{2}(d,y)f(x,y)+q_{2}(d,y)\dint\limits_{x}^{b}w(t)f(t,y)dt \\ 
\\ 
+p_{2}(b,x)\dint\limits_{y}^{d}w(s)f(x,s)ds+\dint\limits_{x}^{b}\dint%
\limits_{y}^{d}w(s)w(t)f(t,s)dsdt.%
\end{array}
\label{9}
\end{equation}%
Adding (\ref{6})-(\ref{9}) and rewriting, we easily deduce:%
\begin{equation}
\begin{array}{l}
\dint\limits_{a}^{b}\dint\limits_{c}^{d}p(x,t)q(y,s)\dfrac{\partial
^{2}f(t,s)}{\partial t\partial s}dsdt=m(a,b)m(c,d)f(x,y)-m(c,d)\dint%
\limits_{a}^{b}w(t)f(t,y)dt \\ 
\\ 
-m(a,b)\dint\limits_{c}^{d}w(s)f(x,s)ds+\dint\limits_{a}^{b}\dint%
\limits_{c}^{d}w(s)w(t)f(t,s)dsdt%
\end{array}
\label{10}
\end{equation}%
which this completes the proof.
\end{proof}

\begin{theorem}
\label{zz} Let $f:[a,b]\times \lbrack c,d]\mathbb{\rightarrow R}$ be an
absolutely continuous fuction such that the partial derivative of order $2$
exist and is bounded, i.e.,%
\begin{equation*}
\left\Vert \frac{\partial ^{2}f(t,s)}{\partial t\partial s}\right\Vert
_{\infty }=\underset{(x,y)\in (a,b)\times (c,d)}{\overset{}{\sup }}%
\left\vert \frac{\partial ^{2}f(t,s)}{\partial t\partial s}\right\vert
<\infty 
\end{equation*}%
for all $(t,s)\in \lbrack a,b]\times \lbrack c,d].$ Then, we have%
\begin{eqnarray}
&&\left\vert f(x,y)-\left[ \frac{1}{m(a,b)}\dint\limits_{a}^{b}w(t)f(t,y)dt+%
\frac{1}{m(c,d)}\dint\limits_{c}^{d}w(s)f(x,s)ds\right] \right.   \notag \\
&&  \notag \\
&&\ \ \ \ \ \ \ \ \ \ \ \ \ \ \ \ \ \ \ \ \ \ \ \ \ \ \ \ \ \ \ \ \ \left. -%
\frac{1}{m(a,b)m(c,d)}\dint\limits_{a}^{b}\dint%
\limits_{c}^{d}w(s)w(t)f(t,s)dsdt\right\vert   \label{11} \\
&&  \notag \\
&\leq &\frac{A(x)B(y)}{m(a,b)m(c,d)}\left\Vert \frac{\partial ^{2}f(t,s)}{%
\partial t\partial s}\right\Vert _{\infty }  \notag
\end{eqnarray}%
where 
\begin{equation*}
A(x)=\dint\limits_{a}^{x}(x-u)w(u)du+\dint\limits_{x}^{b}(u-x)w(u)du
\end{equation*}%
and%
\begin{equation*}
B(y)=\dint\limits_{c}^{y}(y-u)w(u)du+\dint\limits_{y}^{d}(u-y)w(u)du.
\end{equation*}
\end{theorem}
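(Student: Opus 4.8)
The plan is to start from the identity (\ref{4}) of Lemma \ref{z} and simply take absolute values, which reduces the whole theorem to estimating a single double integral. Rearranging (\ref{4}) shows that the quantity inside the absolute value sign on the left-hand side of (\ref{11}) equals
\begin{equation*}
\frac{1}{m(a,b)m(c,d)}\int\limits_a^b\int\limits_c^d p(x,t)\,q(y,s)\,\frac{\partial^2 f(t,s)}{\partial t\,\partial s}\,ds\,dt.
\end{equation*}
Applying the triangle inequality for integrals, bounding the second-order partial derivative uniformly by $\left\Vert \partial^2 f/\partial t\partial s\right\Vert_\infty$, and then using that the kernel factorizes as $p(x,t)q(y,s)$ to split the double integral into a product of two single integrals, I find that the left-hand side of (\ref{11}) is at most
\begin{equation*}
\frac{\left\Vert \partial^2 f/\partial t\partial s\right\Vert_\infty}{m(a,b)m(c,d)}\left(\int\limits_a^b \left\vert p(x,t)\right\vert dt\right)\left(\int\limits_c^d \left\vert q(y,s)\right\vert ds\right).
\end{equation*}

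It then remains to identify the two bracketed integrals with $A(x)$ and $B(y)$. I would treat $\int_a^b |p(x,t)|\,dt$ in detail, the computation for $q$ being identical after relabeling. Since the weight $w$ is nonnegative, on $a\le t<x$ one has $p(x,t)=\int_a^t w(u)\,du\ge 0$, whereas on $x\le t\le b$ one has $p(x,t)=\int_b^t w(u)\,du\le 0$; hence the absolute values can be removed to give
\begin{equation*}
\int\limits_a^b \left\vert p(x,t)\right\vert dt=\int\limits_a^x\left(\int\limits_a^t w(u)\,du\right)dt+\int\limits_x^b\left(\int\limits_t^b w(u)\,du\right)dt.
\end{equation*}

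The crux is to evaluate these two iterated integrals by interchanging the order of integration. For the first, the region $\{a\le u\le t\le x\}$ yields $\int_a^x w(u)\big(\int_u^x dt\big)\,du=\int_a^x (x-u)w(u)\,du$, and for the second the region $\{x\le t\le u\le b\}$ yields $\int_x^b (u-x)w(u)\,du$; their sum is precisely $A(x)$. The same argument applied to $q$ gives $\int_c^d |q(y,s)|\,ds=B(y)$, and substituting both back establishes (\ref{11}). The only point requiring care is fixing the sign of $p$ (and of $q$) on each subinterval so that the absolute values can be dropped before applying Fubini; this is exactly where the nonnegativity of $w$ is used, and once it is settled the remaining computation is entirely routine.
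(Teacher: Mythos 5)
Your proposal is correct and follows essentially the same route as the paper's own proof: apply Lemma \ref{z}, bound the mixed derivative by its sup norm, factor the kernel integral as $\bigl(\int_a^b|p(x,t)|\,dt\bigr)\bigl(\int_c^d|q(y,s)|\,ds\bigr)$, and evaluate each factor by interchanging the order of integration to obtain $A(x)$ and $B(y)$. Your sign analysis of $p$ and $q$ is in fact slightly more careful than the paper, which writes an inequality where you correctly record an equality.
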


\begin{proof}
From Lemma \ref{z} and using the properties of modulus, we observe that%
\begin{eqnarray}
&&\left\vert f(x,y)-\left[ \frac{1}{m(a,b)}\dint\limits_{a}^{b}w(t)f(t,y)dt+%
\frac{1}{m(c,d)}\dint\limits_{c}^{d}w(s)f(x,s)ds\right] \right.   \notag \\
&&  \notag \\
&&\left. -\frac{1}{m(a,b)m(c,d)}\dint\limits_{a}^{b}\dint%
\limits_{c}^{d}w(s)w(t)f(t,s)dsdt\right\vert   \notag \\
&&  \notag \\
&\leq &\frac{1}{m(a,b)m(c,d)}\dint\limits_{a}^{b}\dint\limits_{c}^{d}\left%
\vert p(x,t)\right\vert \left\vert q(y,s)\right\vert \left\vert \frac{%
\partial ^{2}f(t,s)}{\partial t\partial s}\right\vert dsdt  \label{01} \\
&&  \notag \\
&\leq &\frac{1}{m(a,b)m(c,d)}\left\Vert \frac{\partial ^{2}f(t,s)}{\partial
t\partial s}\right\Vert _{\infty
}\dint\limits_{a}^{b}\dint\limits_{c}^{d}\left\vert p(x,t)\right\vert
\left\vert q(y,s)\right\vert dsdt.  \notag
\end{eqnarray}%
Now, using the change of order of integration we get%
\begin{eqnarray}
\dint\limits_{a}^{b}\left\vert p(x,t)\right\vert dt
&=&\dint\limits_{a}^{x}\left\vert p_{1}(a,t)\right\vert
dt+\dint\limits_{x}^{b}\left\vert p_{2}(b,t)\right\vert dt  \notag \\
&&  \notag \\
&\leq
&\dint\limits_{a}^{x}\dint\limits_{a}^{t}w(u)dudt+\dint\limits_{x}^{b}\dint%
\limits_{t}^{b}w(u)dudt  \notag \\
&&  \label{12} \\
&=&\dint\limits_{a}^{x}w(u)\dint\limits_{u}^{x}dtdu+\dint\limits_{x}^{b}w(u)%
\dint\limits_{x}^{u}dtdu  \notag \\
&&  \notag \\
&=&\dint\limits_{a}^{x}(x-u)w(u)du+\dint\limits_{x}^{b}(u-x)w(u)du  \notag
\end{eqnarray}%
and similarly,%
\begin{eqnarray}
\dint\limits_{c}^{d}\left\vert q(y,s)\right\vert ds
&=&\dint\limits_{c}^{y}\left\vert q_{1}(c,s)\right\vert
ds+\dint\limits_{y}^{d}\left\vert q_{2}(d,s)\right\vert ds  \notag \\
&&  \label{13} \\
&\leq &\dint\limits_{c}^{y}(y-u)w(u)du+\dint\limits_{y}^{d}(u-y)w(u)du. 
\notag
\end{eqnarray}%
Thus, using (\ref{12}) and (\ref{13}) in (\ref{01}), we obtain the
inequality (\ref{11}) and the proof is completed.
\end{proof}

\begin{remark}
If we choose $w(u)=1$ in Theorem \ref{zz}$,$ then the inequality (\ref{11})
reduces the inequality (\ref{2}) which is proved by Barnett and Dragomir in 
\cite{Barnett}.
\end{remark}

\begin{corollary}
\label{z2} Under the assumptions of Theorem \ref{zz}, we have%
\begin{multline*}
\left\vert f(\frac{a+b}{2},\frac{c+d}{2})-\left[ \frac{1}{m(a,b)}%
\dint\limits_{a}^{b}w(t)f(t,\frac{c+d}{2})dt+\frac{1}{m(c,d)}%
\dint\limits_{c}^{d}w(s)f(\frac{a+b}{2},s)ds\right] \right.  \\
\\
\left. -\frac{1}{m(a,b)m(c,d)}\dint\limits_{a}^{b}\dint%
\limits_{c}^{d}w(s)w(t)f(t,s)dsdt\right\vert  \\
\\
\leq \frac{A(\frac{a+b}{2})B(\frac{c+d}{2})}{m(a,b)m(c,d)}\left\Vert \frac{%
\partial ^{2}f(t,s)}{\partial t\partial s}\right\Vert _{\infty }
\end{multline*}%
where 
\begin{equation*}
A(\frac{a+b}{2})=\dint\limits_{a}^{\frac{a+b}{2}}(\frac{a+b}{2}%
-u)w(u)du+\dint\limits_{\frac{a+b}{2}}^{b}(u-\frac{a+b}{2})w(u)du
\end{equation*}%
and%
\begin{equation*}
B(\frac{c+d}{2})=\dint\limits_{c}^{\frac{c+d}{2}}(\frac{c+d}{2}%
-u)w(u)du+\dint\limits_{\frac{c+d}{2}}^{d}(u-\frac{c+d}{2})w(u)du.
\end{equation*}
\end{corollary}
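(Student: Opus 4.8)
The plan is simply to specialize Theorem \ref{zz} to the center of the rectangle, taking $x=\frac{a+b}{2}$ and $y=\frac{c+d}{2}$. Since inequality (\ref{11}) is asserted for \emph{all} $(x,y)\in[a,b]\times[c,d]$, it holds in particular at this midpoint, so no fresh estimation is required and the entire content of the corollary is a consistent substitution into a result already in hand.

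The only thing to verify is that the auxiliary quantities $A(x)$ and $B(y)$ assume the stated forms at the midpoints. First I would substitute $x=\frac{a+b}{2}$ into the definition
\[
A(x)=\dint\limits_{a}^{x}(x-u)w(u)du+\dint\limits_{x}^{b}(u-x)w(u)du ,
\]
which immediately reproduces the displayed expression for $A(\frac{a+b}{2})$; the identical substitution $y=\frac{c+d}{2}$ in the definition of $B(y)$ yields $B(\frac{c+d}{2})$. Feeding these two values, together with the correspondingly substituted left-hand side of (\ref{11}), into Theorem \ref{zz} produces exactly the claimed inequality.

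Because the argument is a direct substitution into an inequality already proved for arbitrary $(x,y)$, there is no genuine analytic obstacle here; the one point deserving attention is bookkeeping, namely making sure that \emph{every} occurrence of $x$ and $y$, both in the averaging terms on the left and inside $A$ and $B$ on the right, is replaced consistently by the appropriate midpoint. If desired, one could append a remark that the midpoint is the natural evaluation point when one seeks the smallest value of $A(x)B(y)$ and hence the sharpest constant, but establishing any such extremal property is not needed for the corollary as stated.
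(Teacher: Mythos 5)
Your proof is correct and matches the paper's (implicit) argument: the corollary is exactly Theorem \ref{zz} evaluated at $x=\frac{a+b}{2}$, $y=\frac{c+d}{2}$, and the paper itself offers no further justification. The substitution into $A(x)$ and $B(y)$ is the only bookkeeping required, and you have handled it correctly.
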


\begin{remark}
We choose $w(u)=1$ in Corollary \ref{z2}, we get%
\begin{multline*}
\left\vert f(\frac{a+b}{2},\frac{c+d}{2})-\left[ \frac{1}{b-a}%
\dint\limits_{a}^{b}f(t,\frac{c+d}{2})dt+\frac{1}{d-c}\dint\limits_{c}^{d}f(%
\frac{a+b}{2},s)ds\right] \right. \\
\left. -\frac{1}{(b-a)(d-c)}\dint\limits_{a}^{b}\dint%
\limits_{c}^{d}f(t,s)dsdt\right\vert \\
\leq \frac{(b-a)(d-c)}{16}\left\Vert \frac{\partial ^{2}f(t,s)}{\partial
t\partial s}\right\Vert _{\infty }.
\end{multline*}
\end{remark}

\begin{remark}
We choose $w(u)=u$ in Corollary \ref{z2}, we get%
\begin{multline*}
\left\vert f(\frac{a+b}{2},\frac{c+d}{2})-\left[ \frac{2}{(b-a)^{2}}%
\dint\limits_{a}^{b}tf(t,\frac{c+d}{2})dt+\frac{2}{(d-c)^{2}}%
\dint\limits_{c}^{d}sf(\frac{a+b}{2},s)ds\right] \right. \\
\\
\left. -\frac{4}{(b-a)^{2}(d-c)^{2}}\dint\limits_{a}^{b}\dint%
\limits_{c}^{d}tsf(t,s)dsdt\right\vert \\
\\
\leq \frac{(a+b)(c+d)}{16}\left\Vert \frac{\partial ^{2}f(t,s)}{\partial
t\partial s}\right\Vert _{\infty }.
\end{multline*}
\end{remark}

\begin{lemma}
\label{s1} Let $f:[a,b]\times \lbrack c,d]\mathbb{\rightarrow R}$ be an
absolutely continuous fuction such that the partial derivative of order $2$
exist for all $(t,s)\in \lbrack a,b]\times \lbrack c,d].$ Then, we have%
\begin{eqnarray}
f(x,y) &=&\frac{1}{m(\alpha _{1},\alpha _{2})}\dint\limits_{\alpha
_{1}}^{\alpha _{2}}w(t)f(t,y)dt+\frac{1}{m(\beta _{1},\beta _{2})}%
\dint\limits_{\beta _{1}}^{\beta _{2}}w(s)f(x,s)ds  \notag \\
&&  \label{14} \\
&&-\frac{1}{m(\alpha _{1},\alpha _{2})m(\beta _{1},\beta _{2})}\left[
\dint\limits_{\alpha _{1}}^{\alpha _{2}}\dint\limits_{\beta _{1}}^{\beta
_{2}}w(t)w(s)f(t,s)dsdt-\dint\limits_{\alpha _{1}}^{\alpha
_{2}}\dint\limits_{\beta _{1}}^{\beta _{2}}P(x,t)Q(y,s)\frac{\partial
^{2}f(t,s)}{\partial t\partial s}dsdt\right]  \notag
\end{eqnarray}%
where%
\begin{equation*}
P(x,t)=\left\{ 
\begin{array}{ll}
P_{1}(\alpha _{1},t)=\dint\limits_{\alpha _{1}}^{t}w(u)du, & \alpha _{1}\leq
t<x \\ 
&  \\ 
P_{2}(\alpha _{2},t)=\dint\limits_{\alpha _{2}}^{t}w(u)du, & x\leq t\leq
\alpha _{2}%
\end{array}%
\right.
\end{equation*}%
and%
\begin{equation*}
Q(y,s)=\left\{ 
\begin{array}{ll}
Q_{1}(\beta _{1},s)=\dint\limits_{\beta _{1}}^{s}w(u)du, & \beta _{1}\leq s<y
\\ 
&  \\ 
Q_{2}(\beta _{2},s)=\dint\limits_{\beta _{2}}^{s}w(u)du, & y\leq s\leq \beta
_{2}.%
\end{array}%
\right.
\end{equation*}%
for all $(\alpha _{i},\beta _{i})\in \lbrack a,b]\times \lbrack c,d],\ i=1,2$
with $\alpha _{1}<\alpha _{2},\ \beta _{1}<\beta _{2}.$
\end{lemma}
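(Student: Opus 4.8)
The plan is to observe that the asserted identity (\ref{14}) is precisely the identity (\ref{4}) of Lemma \ref{z} with the four outer endpoints $a,b,c,d$ replaced, respectively, by $\alpha_1,\alpha_2,\beta_1,\beta_2$: indeed the kernels $P,P_1,P_2$ and $Q,Q_1,Q_2$ are exactly the kernels $p,p_1,p_2$ and $q,q_1,q_2$ of Lemma \ref{z} after this relabelling. Consequently the whole proof of Lemma \ref{z} can be reused, and the cleanest route is simply to invoke that lemma on the sub-rectangle $[\alpha_1,\alpha_2]\times[\beta_1,\beta_2]$.

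The only preliminary is to check that the hypotheses of Lemma \ref{z} survive the restriction. Since $(\alpha_i,\beta_i)\in[a,b]\times[c,d]$ with $\alpha_1<\alpha_2$ and $\beta_1<\beta_2$, the rectangle $[\alpha_1,\alpha_2]\times[\beta_1,\beta_2]$ is contained in $[a,b]\times[c,d]$; hence $f$ restricted to it is still absolutely continuous and still admits the mixed partial $\partial^2 f/\partial t\partial s$ at every point. Moreover the branch definitions of $P(x,t)$ and $Q(y,s)$ switch exactly at $t=x$ and $s=y$, so they presuppose $x\in[\alpha_1,\alpha_2]$ and $y\in[\beta_1,\beta_2]$; with that understood, all the objects in (\ref{14}) are well defined.

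If one prefers a self-contained verification rather than a citation, I would repeat the computation verbatim. Using the branch definitions, the integral $\int\limits_{\alpha_1}^{\alpha_2}\int\limits_{\beta_1}^{\beta_2}P(x,t)Q(y,s)\,\frac{\partial^2 f(t,s)}{\partial t\partial s}\,ds\,dt$ splits into the four sub-rectangles $[\alpha_1,x]\times[\beta_1,y]$, $[\alpha_1,x]\times[y,\beta_2]$, $[x,\alpha_2]\times[\beta_1,y]$, $[x,\alpha_2]\times[y,\beta_2]$, on which $(P,Q)$ equals $(P_1,Q_1),(P_1,Q_2),(P_2,Q_1),(P_2,Q_2)$ respectively. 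On each piece I integrate by parts once in $s$ and once in $t$, using $\partial P_i/\partial t=w(t)$, $\partial Q_j/\partial s=w(s)$ and the vanishing boundary values $P_1(\alpha_1,\alpha_1)=P_2(\alpha_2,\alpha_2)=Q_1(\beta_1,\beta_1)=Q_2(\beta_2,\beta_2)=0$; this reproduces the four identities (\ref{6})--(\ref{9}) with the endpoints relabelled. Adding the four and using the telescoping relations $P_1(\alpha_1,x)-P_2(\alpha_2,x)=m(\alpha_1,\alpha_2)$ and $Q_1(\beta_1,y)-Q_2(\beta_2,y)=m(\beta_1,\beta_2)$, the coefficient of $f(x,y)$ becomes $m(\alpha_1,\alpha_2)m(\beta_1,\beta_2)$, the single integrals collapse to the two weighted marginal averages, and the remainder assembles into the full weighted double integral, giving the analogue of (\ref{10}). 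Solving that identity for $f(x,y)$ yields (\ref{14}). I expect no real obstacle here: the argument is entirely mechanical, and the only place demanding care is the sign bookkeeping arising from the two reversed branches $P_2,Q_2$ (whose defining integrals run from the upper endpoint), exactly as in the original derivation.
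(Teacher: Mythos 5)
Your proposal is correct and matches the paper's own argument: the paper likewise splits the double integral over the four sub-rectangles determined by $(x,y)$ and then says the identity follows ``by similar computation in Lemma \ref{z}'', which is exactly your relabelling of $a,b,c,d$ as $\alpha_1,\alpha_2,\beta_1,\beta_2$. Your added checks (that the hypotheses restrict to the sub-rectangle and that $P_1(\alpha_1,x)-P_2(\alpha_2,x)=m(\alpha_1,\alpha_2)$, $Q_1(\beta_1,y)-Q_2(\beta_2,y)=m(\beta_1,\beta_2)$) are accurate and only make the omitted details explicit.
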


\begin{proof}
By definitions of $P(x,t)$ and $Q(y,s),$ we have%
\begin{equation*}
\begin{array}{l}
\dint\limits_{\alpha _{1}}^{\alpha _{2}}\dint\limits_{\beta _{1}}^{\beta
_{2}}P(x,t)Q(y,s)\dfrac{\partial ^{2}f(t,s)}{\partial t\partial s}%
dsdt=\dint\limits_{\alpha _{1}}^{x}\dint\limits_{\beta _{1}}^{y}P_{1}(\alpha
_{1},t)Q_{1}(\beta _{1},s)\dfrac{\partial ^{2}f(t,s)}{\partial t\partial s}%
dsdt \\ 
\\ 
+\dint\limits_{\alpha _{1}}^{x}\dint\limits_{y}^{\beta _{2}}P_{1}(\alpha
_{1},t)Q_{2}(\beta _{2},s)\dfrac{\partial ^{2}f(t,s)}{\partial t\partial s}%
dsdt+\dint\limits_{x}^{\alpha _{2}}\dint\limits_{\beta _{1}}^{y}P_{2}(\alpha
_{2},t)Q_{1}(\beta _{1},s)\dfrac{\partial ^{2}f(t,s)}{\partial t\partial s}%
dsdt \\ 
\\ 
+\dint\limits_{x}^{\alpha _{2}}\dint\limits_{y}^{\beta _{2}}P_{2}(\alpha
_{2},t)Q_{2}(\beta _{2},s)\dfrac{\partial ^{2}f(t,s)}{\partial t\partial s}%
dsdt.%
\end{array}%
\end{equation*}%
By similar computation in Lemma \ref{z}, we get (\ref{14}).
\end{proof}

\begin{theorem}
\label{s2} Let $f:[a,b]\times \lbrack c,d]\mathbb{\rightarrow R}$ be an
absolutely continuous fuction such that the partial derivative of order $2$
exist and is bounded, i.e.,%
\begin{equation*}
\left\Vert \frac{\partial ^{2}f(t,s)}{\partial t\partial s}\right\Vert
_{\infty }=\underset{(x,y)\in (a,b)\times (c,d)}{\overset{}{\sup }}%
\left\vert \frac{\partial ^{2}f(t,s)}{\partial t\partial s}\right\vert
<\infty
\end{equation*}%
for all $(t,s)\in \lbrack a,b]\times \lbrack c,d].$ Then, we have%
\begin{eqnarray}
&&\left\vert f(x,y)-\left[ \frac{1}{m(\alpha _{1},\alpha _{2})}%
\dint\limits_{\alpha _{1}}^{\alpha _{2}}w(t)f(t,y)dt+\frac{1}{m(\beta
_{1},\beta _{2})}\dint\limits_{\beta _{1}}^{\beta _{2}}w(s)f(x,s)ds\right]
\right.  \notag \\
&&  \notag \\
&&\ \ \ \ \ \ \ \ \ \ \ \ \ \ \ \ \ \ \ \ \ \ \ \ \left. -\frac{1}{m(\alpha
_{1},\alpha _{2})m(\beta _{1},\beta _{2})}\dint\limits_{\alpha _{1}}^{\alpha
_{2}}\dint\limits_{\beta _{1}}^{\beta _{2}}w(t)w(s)f(t,s)dsdt\right\vert
\label{15} \\
&&  \notag \\
&\leq &\frac{A_{1}(x)B_{1}(y)}{m(\alpha _{1},\alpha _{2})m(\beta _{1},\beta
_{2})}\left\Vert \frac{\partial ^{2}f(t,s)}{\partial t\partial s}\right\Vert
_{\infty }  \notag
\end{eqnarray}%
where 
\begin{equation*}
A_{1}(x)=\dint\limits_{\alpha _{1}}^{x}(x-u)w(u)du+\dint\limits_{x}^{\alpha
_{2}}(u-x)w(u)du
\end{equation*}%
and%
\begin{equation*}
B_{1}(y)=\dint\limits_{\beta _{1}}^{y}(y-u)w(u)du+\dint\limits_{y}^{\beta
_{2}}(u-y)w(u)du
\end{equation*}%
for all $(\alpha _{i},\beta _{i})\in \lbrack a,b]\times \lbrack c,d],\ i=1,2$
with $\alpha _{1}<\alpha _{2},\ \beta _{1}<\beta _{2}.$
\end{theorem}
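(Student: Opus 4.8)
The plan is to reproduce, almost verbatim, the argument used for Theorem \ref{zz}, since Theorem \ref{s2} is the same assertion with the rectangle $[a,b]\times[c,d]$ replaced by the subrectangle $[\alpha_1,\alpha_2]\times[\beta_1,\beta_2]$ and the kernels $p,q$ replaced by $P,Q$. Thus the starting point is the identity furnished by Lemma \ref{s1}, rather than Lemma \ref{z}. After transposing the two averaging terms and the double-average term to the left, the quantity inside the modulus on the left of (\ref{15}) is exactly
\[
\frac{1}{m(\alpha_1,\alpha_2)\,m(\beta_1,\beta_2)}\dint\limits_{\alpha_1}^{\alpha_2}\dint\limits_{\beta_1}^{\beta_2}P(x,t)Q(y,s)\frac{\partial^2 f(t,s)}{\partial t\partial s}\,ds\,dt ,
\]
so the whole problem reduces to bounding this single double integral.

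First I would apply the triangle inequality for integrals, pulling the modulus inside, and then bound $\left|\partial^2 f/\partial t\partial s\right|$ by its supremum $\left\Vert \partial^2 f/\partial t\partial s\right\Vert_\infty$, exactly as in the passage leading to (\ref{01}). Because $P(x,t)$ depends only on $t$ and $Q(y,s)$ only on $s$, the remaining double integral $\dint_{\alpha_1}^{\alpha_2}\dint_{\beta_1}^{\beta_2}\left|P(x,t)\right|\left|Q(y,s)\right|\,ds\,dt$ factors as a product of two single integrals, giving the bound
\[
\frac{1}{m(\alpha_1,\alpha_2)\,m(\beta_1,\beta_2)}\left\Vert \frac{\partial^2 f(t,s)}{\partial t\partial s}\right\Vert_\infty\left(\dint\limits_{\alpha_1}^{\alpha_2}\left|P(x,t)\right|dt\right)\left(\dint\limits_{\beta_1}^{\beta_2}\left|Q(y,s)\right|ds\right).
\]

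Next I would evaluate the two one-dimensional integrals, mirroring (\ref{12})--(\ref{13}). The point is to fix the signs of the kernel pieces using $w\ge 0$: for $\alpha_1\le t<x$ one has $P_1(\alpha_1,t)=\dint_{\alpha_1}^{t}w(u)\,du\ge 0$, while for $x\le t\le\alpha_2$ one has $P_2(\alpha_2,t)=\dint_{\alpha_2}^{t}w(u)\,du=-\dint_{t}^{\alpha_2}w(u)\,du\le 0$. Hence $\dint_{\alpha_1}^{\alpha_2}\left|P(x,t)\right|dt=\dint_{\alpha_1}^{x}\dint_{\alpha_1}^{t}w(u)\,du\,dt+\dint_{x}^{\alpha_2}\dint_{t}^{\alpha_2}w(u)\,du\,dt$, and a Fubini interchange of the order of integration turns the inner integrals into weights $(x-u)$ and $(u-x)$, producing precisely $A_1(x)$. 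The identical computation with $Q$ and $w\ge 0$ yields $B_1(y)$. Substituting these two evaluations into the displayed bound gives (\ref{15}).

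I expect no genuine conceptual obstacle here; the only place demanding care is the sign bookkeeping for $P_2$ and $Q_2$ on the upper subintervals (where the lower limit of the defining integral exceeds the upper limit, so the kernel is nonpositive and the modulus flips the sign), together with the Fubini swap — exactly the two technical points already handled in the proof of Theorem \ref{zz}. Everything else is a direct transcription in which $a,b,c,d,m(a,b),m(c,d),p,q,A,B$ are replaced by $\alpha_1,\alpha_2,\beta_1,\beta_2,m(\alpha_1,\alpha_2),m(\beta_1,\beta_2),P,Q,A_1,B_1$.
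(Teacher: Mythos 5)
Your proposal is correct and follows exactly the route the paper intends: the paper's own proof of Theorem \ref{s2} is a one-line deferral ("by similar computation in Theorem \ref{zz}"), and you have simply written out that computation, starting from the identity of Lemma \ref{s1} (the paper's citation of Lemma \ref{z} there is evidently a typo), taking the modulus inside, factoring the kernel integral, and evaluating $\dint_{\alpha_1}^{\alpha_2}\left\vert P(x,t)\right\vert dt$ and $\dint_{\beta_1}^{\beta_2}\left\vert Q(y,s)\right\vert ds$ by the same sign analysis and Fubini swap as in (\ref{12})--(\ref{13}). No substantive difference from the paper's argument.
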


\begin{proof}
From Lemma \ref{z}, using the properties of modulus and by similar
computation in Theorem \ref{zz}, we get (\ref{15}). The details are omitted.
\end{proof}

\begin{remark}
We choose $\alpha _{1}=a,\ \alpha _{2}=b,\ \beta _{1}=c,\ \beta _{2}=d\ $ in
(\ref{15}), then Theorem \ref{s2} reduces Theorem \ref{zz}.
\end{remark}

\begin{corollary}
\label{s3} Under the assumptions of Theorem \ref{s2}, we have%
\begin{multline*}
\left\vert f(\frac{a+b}{2},\frac{c+d}{2})-\left[ \frac{1}{m(\alpha
_{1},\alpha _{2})}\dint\limits_{\alpha _{1}}^{\alpha _{2}}w(t)f(t,\frac{c+d}{%
2})dt+\frac{1}{m(\beta _{1},\beta _{2})}\dint\limits_{\beta _{1}}^{\beta
_{2}}w(s)f(\frac{a+b}{2},s)ds\right] \right. \\
\\
\left. -\frac{1}{m(\alpha _{1},\alpha _{2})m(\beta _{1},\beta _{2})}%
\dint\limits_{\alpha _{1}}^{\alpha _{2}}\dint\limits_{\beta _{1}}^{\beta
_{2}}w(t)w(s)f(t,s)dsdt\right\vert \\
\\
\leq \frac{A_{1}(\frac{a+b}{2})B_{1}(\frac{c+d}{2})}{m(\alpha _{1},\alpha
_{2})m(\beta _{1},\beta _{2})}\left\Vert \frac{\partial ^{2}f(t,s)}{\partial
t\partial s}\right\Vert _{\infty }
\end{multline*}%
where 
\begin{equation*}
A_{1}(\frac{a+b}{2})=\dint\limits_{\alpha _{1}}^{\frac{a+b}{2}}(\frac{a+b}{2}%
-u)w(u)du+\dint\limits_{\frac{a+b}{2}}^{\alpha _{2}}(u-\frac{a+b}{2})w(u)du
\end{equation*}%
and%
\begin{equation*}
B_{1}(\frac{c+d}{2})=\dint\limits_{\beta _{1}}^{\frac{c+d}{2}}(\frac{c+d}{2}%
-u)w(u)du+\dint\limits_{\frac{c+d}{2}}^{\beta _{2}}(u-\frac{c+d}{2})w(u)du.
\end{equation*}
\end{corollary}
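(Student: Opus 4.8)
The plan is to obtain Corollary \ref{s3} as an immediate specialization of Theorem \ref{s2}, evaluating the inequality (\ref{15}) at the central point $(x,y)=\left(\frac{a+b}{2},\frac{c+d}{2}\right)$. Theorem \ref{s2} furnishes (\ref{15}) for admissible $(x,y)$, so the whole argument reduces to a substitution together with the observation that the functions $A_1$ and $B_1$, being defined pointwise in $x$ and $y$, specialize to the displayed quantities $A_1\!\left(\frac{a+b}{2}\right)$ and $B_1\!\left(\frac{c+d}{2}\right)$. I therefore expect no genuine analytic difficulty, only the bookkeeping of the substitution and a small admissibility check.

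First I would replace $x$ by $\frac{a+b}{2}$ and $y$ by $\frac{c+d}{2}$ throughout the left-hand side of (\ref{15}). This turns $f(x,y)$ into $f\!\left(\frac{a+b}{2},\frac{c+d}{2}\right)$ and the two weighted one-dimensional averages $\frac{1}{m(\alpha_1,\alpha_2)}\int_{\alpha_1}^{\alpha_2}w(t)f(t,y)\,dt$ and $\frac{1}{m(\beta_1,\beta_2)}\int_{\beta_1}^{\beta_2}w(s)f(x,s)\,ds$ into their values at $y=\frac{c+d}{2}$ and $x=\frac{a+b}{2}$, respectively, while the double weighted average $\frac{1}{m(\alpha_1,\alpha_2)m(\beta_1,\beta_2)}\int_{\alpha_1}^{\alpha_2}\int_{\beta_1}^{\beta_2}w(t)w(s)f(t,s)\,ds\,dt$ is left untouched, since it does not depend on $(x,y)$. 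This reproduces exactly the expression inside the absolute value in the statement of Corollary \ref{s3}. Correspondingly, substituting $x=\frac{a+b}{2}$ and $y=\frac{c+d}{2}$ into the formulas $A_1(x)=\int_{\alpha_1}^{x}(x-u)w(u)\,du+\int_{x}^{\alpha_2}(u-x)w(u)\,du$ and $B_1(y)=\int_{\beta_1}^{y}(y-u)w(u)\,du+\int_{y}^{\beta_2}(u-y)w(u)\,du$ from Theorem \ref{s2} produces precisely $A_1\!\left(\frac{a+b}{2}\right)$ and $B_1\!\left(\frac{c+d}{2}\right)$ as written, and inserting these into the right-hand side of (\ref{15}) yields the asserted bound $\frac{A_1\left(\frac{a+b}{2}\right)B_1\left(\frac{c+d}{2}\right)}{m(\alpha_1,\alpha_2)m(\beta_1,\beta_2)}\left\Vert \frac{\partial^2 f(t,s)}{\partial t\partial s}\right\Vert_{\infty}$.

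The one point that requires a word of care, and which I regard as the only (minor) obstacle, is the admissibility of the substitution: the kernels $P(x,t)$ and $Q(y,s)$, and hence the identity of Lemma \ref{s1} and the bound of Theorem \ref{s2}, are meaningful only when $x\in[\alpha_1,\alpha_2]$ and $y\in[\beta_1,\beta_2]$. Thus the corollary should be read under the tacit requirement $\frac{a+b}{2}\in[\alpha_1,\alpha_2]$ and $\frac{c+d}{2}\in[\beta_1,\beta_2]$, so that the central point is an interior (or boundary) node of the chosen subintervals; granting this, the integrals defining $A_1\!\left(\frac{a+b}{2}\right)$ and $B_1\!\left(\frac{c+d}{2}\right)$ are the genuine splittings at the midpoints, and the proof is complete. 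Since this is a verbatim specialization, I would simply invoke Theorem \ref{s2} and omit the routine recomputation.
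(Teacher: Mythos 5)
Your proposal is correct and coincides with the paper's treatment: the paper offers no separate proof, regarding the corollary as the immediate specialization $x=\frac{a+b}{2}$, $y=\frac{c+d}{2}$ of inequality (\ref{15}) in Theorem \ref{s2}, exactly as you carry out. Your added remark that one must tacitly assume $\frac{a+b}{2}\in[\alpha_1,\alpha_2]$ and $\frac{c+d}{2}\in[\beta_1,\beta_2]$ is a legitimate point of care that the paper omits.
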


\begin{remark}
We choose $w(u)=1$ in Corollary \ref{s3}, we get%
\begin{multline*}
\left\vert f(\frac{a+b}{2},\frac{c+d}{2})-\left[ \frac{1}{(\alpha
_{2}-\alpha _{1})}\dint\limits_{\alpha _{1}}^{\alpha _{2}}f(t,\frac{c+d}{2}%
)dt+\frac{1}{(\beta _{2}-\beta _{1})}\dint\limits_{\beta _{1}}^{\beta _{2}}f(%
\frac{a+b}{2},s)ds\right] \right. \\
\\
\left. -\frac{1}{(\alpha _{2}-\alpha _{1})(\beta _{2}-\beta _{1})}%
\dint\limits_{\alpha _{1}}^{\alpha _{2}}\dint\limits_{\beta _{1}}^{\beta
_{2}}f(t,s)dsdt\right\vert \\
\\
\leq \frac{A_{3}B_{3}}{64(\alpha _{2}-\alpha _{1})(\beta _{2}-\beta _{1})}%
\left\Vert \frac{\partial ^{2}f(t,s)}{\partial t\partial s}\right\Vert
_{\infty }
\end{multline*}%
where%
\begin{equation*}
A_{3}=(a+b-2\alpha _{1})^{2}+(a+b-2\alpha _{2})^{2}
\end{equation*}%
and%
\begin{equation*}
B_{3}=(c+d-2\beta _{1})^{2}+(c+d-2\beta _{2})^{2}.
\end{equation*}
\end{remark}


\begin{thebibliography}{9}
\bibitem{Barnett} N. S. Barnett and S. S. Dragomir, \textit{An Ostrowski
type inequality for double integrals and applications for cubature formulae,}
Soochow J. Math., 27(1), (2001), 109-114.

\bibitem{Dragomir} S. S. Dragomir, N. S. Barnett and P. Cerone, \textit{An
n-dimensional version of Ostrowski's inequality for mappings of H\"{o}lder
type}, RGMIA Res. Pep. Coll., 2(2), (1999), 169-180.

\bibitem{Ostrowski} A. M. Ostrowski, \textit{\"{U}ber die absolutabweichung
einer differentiebaren funktion von ihrem integralmitelwert}, Comment. Math.
Helv. 10(1938), 226-227.

\bibitem{Pachpatte} B. G. Pachpatte, \textit{On an inequality of Ostrowski
type in three independent variables}, J. Math.Anal. Appl., 249(2000),
583-591.

\bibitem{Pachpatte1} B. G. Pachpatte, \textit{On a new Ostrowski type
inequality in two independent variables}, Tamkang J. Math., 32(1), (2001),
45-49

\bibitem{Pachpatte2} B. G. Pachpatte, \textit{A new Ostrowski type
inequality for double integrals}, Soochow J. Math., 32(2), (2006), 317-322.

\bibitem{sarikaya} M. Z. Sarikaya, \textit{On the Ostrowski type integral
inequality}, Acta Math. Univ. Comenianae, Vol. LXXIX, 1(2010), pp. 129-134.

\bibitem{Ujevic} N. Ujevi\'{c}, \textit{Some double integral inequalities
and applications, }Appl. Math. E-Notes, 7(2007), 93-101.
\end{thebibliography}
\end{document}